\numberwithin{equation}{section}
\newtheorem{Theorem}{Theorem}[section]
\newtheorem{Definition}{Definition}[section]
\newtheorem{Proposition}[Theorem]{Proposition}
\newtheorem{Lemma}{Lemma}[section]
\newtheorem{Corollary}[Theorem]{Corollary}
\newtheorem{Remark}{Remark}[section]
\newtheorem{Example}{Example}[section]
\newtheorem{Question}{Question}[section]
 \def\@biblabel#1{#1.}
\newcommand{\uu}{{\bf u}}
\newcommand{\x}{{\bf x}}
\newcommand{\y}{{\bf y}}
\newcommand{\z}{{\bf z}}
\newcommand{\q}{{\bf q}}
\newcommand{\e}{{\bf e}}
\newcommand{\0}{{\bf 0}}
\newcommand{\w}{{\bf w}}
\begin{document}
\title{Properties of Tensor Complementarity Problem and Some Classes of Structured Tensors}

\author{Yisheng Song\thanks{ Corresponding author. School of Mathematics and Information Science, Henan Normal University, XinXiang HeNan,  P.R. China, 453007.
Email: songyisheng1@gmail.com. This author's work was partially supported by
the National Natural Science Foundation of P.R. China (Grant No.
11571095, 11601134).} \quad Liqun Qi\thanks{ Department of Applied Mathematics, The Hong Kong Polytechnic University, Hung Hom,
Kowloon, Hong Kong. Email: maqilq@polyu.edu.hk. This author's work was supported by the Hong
Kong Research Grant Council (Grant No. PolyU
502111, 501212, 501913 and 15302114).}}

\date{}

 \maketitle

\begin{abstract}
\noindent  
  This paper deals with the class of Q-tensors, that is,  a Q-tensor is a real tensor $\mathcal{A}$ such that  the tensor complementarity problem $(\q, \mathcal{A})$:
	$$\mbox{ finding } \x \in \mathbb{R}^n\mbox{ such that }\x \geq \0, \q + \mathcal{A}\x^{m-1} \geq \0, \mbox{ and }\x^\top (\q + \mathcal{A}\x^{m-1}) = 0, $$ has a  solution for each vector $\q \in \mathbb{R}^n$. Several subclasses of Q-tensors are given: P-tensors, R-tensors, strictly semi-positive tensors and semi-positive R$_0$-tensors.  We prove that a nonnegative tensor is a Q-tensor if and only if all of its principal  diagonal entries are positive, and so the equivalence of Q-tensor, R-tensors, strictly semi-positive tensors is showed if they are nonnegative tensors. We also show that a tensor is a R$_0$-tensor if and only if the tensor complementarity problem $(\0, \mathcal{A})$ has no non-zero vector solution, and a tensor is a R-tensor if and only if it is a R$_0$-tensor and the tensor complementarity problem $(\e, \mathcal{A})$ has no non-zero vector solution, where $\e=(1,1\cdots,1)^\top$.

\noindent {\bf Key words:}\hspace{2mm} Q-tensor, R-tensor, R$_0$-tensor,  strictly semi-positive, tensor complementarity problem.  \vspace{3mm}

\noindent {\bf AMS subject classifications (2010):}\hspace{2mm} 65H17, 15A18, 90C30
47H15, 47H12, 34B10, 47A52, 47J10, 47H09, 15A48, 47H07.
  \vspace{3mm}

\end{abstract}


\section{Introduction}
\hspace{4mm}
Throughout this paper, we use small letters $x, u, v, \alpha, \cdots$, for scalars, small bold
letters $\x, \y, \uu, \cdots$, for vectors, capital letters $A, B,
\cdots$, for matrices, calligraphic letters $\mathcal{A}, \mathcal{B}, \cdots$, for
tensors. All the tensors discussed in this paper are real.  Let $I_n := \{ 1,2, \cdots, n \}$, and $\mathbb{R}^n:=\{(x_1, x_2,\cdots, x_n)^\top;x_i\in  \mathbb{R},  i\in I_n\}$, $\mathbb{R}^n_{+}:=\{x\in \mathbb{R}^n;x\geq\0\}$, $\mathbb{R}^n_{-}:=\{\x\in \mathbb{R}^n;x\leq\0\}$, $\mathbb{R}^n_{++}:=\{\x\in \mathbb{R}^n;x>\0\}$, $\e=(1,1,\cdots,1)^\top$, and $\x^{[m]} = (x_1^m, x_2^m,\cdots, x_n^m)^\top$ for $\x = (x_1, x_2,\cdots, x_n)^\top$, where $\mathbb{R}$ is the set of real numbers, $\x^\top$ is the transposition of a vector $\x$, and $\x\geq\0$ ($\x>\0$) means $x_i\geq0$ ($x_i>0$) for all $i\in I_n$.

Let $A = (a_{ij})$ be an $n \times n $ real matrix. $A$ is said to be a {\bf Q-matrix} iff the linear complementarity problem, denoted by $(\q,A)$,
\begin{equation}\label{eq:11}
\mbox{ find } \z \in \mathbb{R}^n\mbox{ such that }\z \geq \0, \q + A\z \geq \0, \mbox{ and }\z^\top (\q + A\z) = 0
\end{equation}
has a  solution for each vector $\q \in \mathbb{R}^n$. We say that
$A$ is a  {\bf P-matrix} iff for any nonzero vector $\x$ in $\mathbb{R}^n$,
there exists $i\in I_n$ such that\ $x_i (Ax)_i > 0.$
It is well-known that $A$ is a P-matrix  if and only if the linear complementarity problem $(\q,A)$ has a unique solution for all $\q \in \mathbb{R}^n$.  Xiu and Zhang \cite{XZ} also gave the necessary and sufficient conditions of P-matrices. A good review of P-matrices and Q-matrices may be found in the books by Berman and
Plemmons \cite{BP}, and Cottle, Pang and Stone \cite{CPS}.

 Q-matrices and P(P$_0$)-matrices have a long history and wide applications in mathematical
 sciences. Pang \cite{P79} showed that each semi-monotone R$_0$-matrix is a Q-matrix. Pang \cite{P81}
 gave a class of Q-matrices which includes N-matrices and strictly semi-monotone matrices. Murty \cite{M72} showed that  a nonnegative matrix is a Q-matrix if and only if its all diagonal entries are positive.
 Morris \cite{M88} presented two counterexamples of the Q-Matrix conjectures: a matrix
 is  Q-matrix solely by considering the signs of its subdeterminants.    Cottle \cite{C80} studied some properties of complete Q-matrices, a subclass of Q-matrices.  Kojima and Saigal \cite{KR79} studied the number of solutions to a class of linear complementarity
problems. Gowda \cite{S90} proved that  a symmetric semi-monotone matrix is a Q-matrix if and only if it is an R$_0$-matrix. Eaves \cite{E71} obtained the equivalent definition of strictly semi-monotone matrices, a main subclass of Q-matrices.  \\

On the other hand, motivated by the discussion on positive
definiteness of multivariate homogeneous polynomial forms \cite{BM, HH, JM},
in 2005, Qi \cite{Qi} introduced the concept of
positive (semi-)definite symmetric tensors. In the same time, Qi  also
introduced eigenvalues, H-eigenvalues, E-eigenvalues and
Z-eigenvalues for symmetric tensors.    It was shown that an even
order symmetric tensor is positive (semi-)definite if and only if
all of its H-eigenvalues or Z-eigenvalues are positive (nonnegative)
(\cite[Theorem 5]{Qi}). Various structured tensors were studied well.   For example,
Zhang, Qi and Zhou \cite{ZQZ} and
Ding, Qi and Wei \cite{DQW} for M-tensors, Song and Qi \cite{SQ-15} for P-(P$_0$)tensors and B-(B$_0$)tensors, Qi and Song \cite{QS} for positive (semi-)definition of B-(B$_0$)tensors,  Song and Qi \cite{SQ1} for infinite and finite dimensional Hilbert tensors, Song and Qi \cite{SQ 15} for  structure properties and an equivalent definition of (strictly) copositive tensors, Chen and Qi \cite{CQ} for Cauchy tensor, Song and Qi \cite{SQ} for E-eigenvalues of weakly symmetric nonnegative tensors and so on. Beside automatical control, positive
semi-definite tensors also found applications in magnetic resonance
imaging \cite{CDHS, HHNQ, QYW, QYX}
and spectral hypergraph theory \cite{HQ12, LQY, Qi14}. Recently,  Song and Qi \cite{SQ-2015} extended the linear complementarity problem  to the tensor complementarity problem, a special class of nonlinear complementarity problems, denoted by TCP$(\q,\mathcal{A})$:  finding $\x \in \mathbb{R}^n$ such that 	$$ \x \geq \0, \q + \mathcal{A}\x^{m-1} \geq \0, \mbox{ and }\x^\top (\q + \mathcal{A}\x^{m-1}) = 0\leqno{\bf TCP(\q,\mathcal{A})}$$
or showing that no such vector exists. \\

 Very recently, an $n-$person noncooperative game was converted by Huang and Qi \cite{HQ} to a tensor complementarity problem. Furthermore, they gave the equivalence between a Nash equilibrium point of the multilinear game and a solution of the tensor complementarity problem.  The equivalence between  (strictly) semi-positive tensors and (strictly) copositive tensors in the case of symmetry were showed by Song and Qi \cite{SQ15}. The existence and uniqueness of solution of TCP$(\q,\mathcal{A})$ with some special tensors were  discussed  by  Che, Qi, Wei \cite{CQW}. The boundedness of the solution set of  the TCP$(\q,\mathcal{A})$ was studied by Song and Yu \cite{SY15}. The sparsest solutions to TCP$(\q,\mathcal{A})$ with a Z-tensor and its method to calculate were obtained by  Luo, Qi and Xiu \cite{LQX}. The equivalent conditions of solution to TCP$(\q,\mathcal{A})$ were showed by Gowda, Luo, Qi and Xiu \cite{GLQX} for a Z-tensor $\mathcal{A}$.  The global uniqueness of solution of  TCP$(\q,\mathcal{A})$ was considered by  Bai, Huang and Wang \cite{BHW} for a strong P-tensor $\mathcal{A}$. The solvability of  TCP$(\q,\mathcal{A})$ was given by Wang, Huang and Bai \cite{WHB} for a class of exceptionally regular tensors $\mathcal{A}$. The properties of  TCP$(\q,\mathcal{A})$ was studied by Ding, Luo and Qi \cite{DLQ} for   a new class of P-tensor $\mathcal{A}$.  The nice properties of the several classes of  Q-tensors were presented by Suo and Wang \cite{HSW}. The properties and  algorithm of the tensor eigenvalue complementarity problem were studied by
Song and Qi \cite{SQ13}, Ling, He, Qi \cite{LHQ2015, LHQ15}, Chen, Yang, Ye \cite{CYY}, respectively. \\

The following  questions are natural. Can we extend the concept of Q-matrices to Q-tensors?  If this can be done, are those nice properties of Q-matrices still true for Q-tensors?\\

 In this paper, we will introduce the concept of Q-tensors (Q-hypermatrices) and will study some subclasses and nice properties of such tensors.\\

In Section 2, we will extend the concept of Q-matrices to Q-tensors.  Serval main subclasses of Q-matrices also are extended to the corresponding subclasses of Q-tensors: R-tensors, R$_0$-tensors, semi-positive tensors, strictly semi-positive tensors. We will give serval examples to verify that the class of R-(R$_0$-)tensors properly contains strictly semi-positive tensors as a subclass, while the class of P-tensors  is a subclass of strictly semi-positive tensors. Some basic definitions and facts also are given in this section.

In Section 3, we will study some properties of Q-tensors. Firstly,  the equivalent definition of R-tensors is given: a tensor is a R$_0$-tensor if and only if the tensor complementarity problem $(\0, \mathcal{A})$ has not non-zero vector solution and a tensor is a R-tensor if and only if it is a R$_0$-tensor and the tensor complementarity problem $(\e, \mathcal{A})$ has not non-zero vector solution, where $\e=(1,1\cdots,1)^\top$.
 Subsequently, we will prove that each R-tensor is certainly a Q-tensor and each semi-positive R$_0$-tensor is a R-tensor. Thus, we show that every P-tensor is a Q-tensor. We will show that a nonnegative tensor is a Q-tensor if and only if all of its principal  diagonal elements are positive, and so the relationship of several structured sensors are given.  It will be proved that $\0$ is the unique feasible solution of the tensor complementarity problem $(\q,\mathcal{A})$ for $\q\geq\0$ if $\mathcal{A}$ is a non-negative Q-tensor.  

\section{Preliminaries}

In this section, we will define the notation and collect some basic definitions and facts, which will be
used later on.

A real $m$th order $n$-dimensional tensor (hypermatrix) $\mathcal{A} = (a_{i_1\cdots i_m})$ is a multi-array of real entries $a_{i_1\cdots
	i_m}$, where $i_j \in I_n$ for $j \in I_m$. Denote the set of all
real $m$th order $n$-dimensional tensors by $T_{m, n}$. Then $T_{m,
	n}$ is a linear space of dimension $n^m$. Let $\mathcal{A} = (a_{i_1\cdots
	i_m}) \in T_{m, n}$. If the entries $a_{i_1\cdots i_m}$ are
invariant under any permutation of their indices, then $\mathcal{A}$ is
called a {\bf symmetric tensor}.
 The zero tensor in $T_{m, n}$ is denoted by $\mathcal{O}$.   Let $\mathcal{A} =(a_{i_1\cdots i_m}) \in
T_{m, n}$ and $\x \in \mathbb{R}^n$. Then $\mathcal{A} \x^{m-1}$ is a vector in $\mathbb{R}^n$ with
its $i$th component as
$$\left(\mathcal{A} \x^{m-1}\right)_i: = \sum_{i_2, \cdots, i_m=1}^n a_{ii_2\cdots
	i_m}x_{i_2}\cdots x_{i_m}$$ for $i \in I_n$.
  We now give the
definition of Q-tensors, which are natural extensions of Q-matrices.

\begin{Definition} \label{d11}
	Let $\mathcal{A}  = (a_{i_1\cdots i_m}) \in T_{m, n}$.   We say that $\mathcal{A}$ is a {\bf Q-tensor} iff the tensor complementarity problem, denoted by $(\q, \mathcal{A})$,
	\begin{equation}\label{eq:12}\mbox{ finding } \x \in \mathbb{R}^n\mbox{ such that }\x \geq \0, \q + \mathcal{A}\x^{m-1} \geq \0, \mbox{ and }\x^\top (\q + \mathcal{A}\x^{m-1}) = 0,\end{equation} has a  solution for each vector $\q \in \mathbb{R}^n$.
\end{Definition}

\begin{Definition} \label{d21}\em
Let $\mathcal{A}  = (a_{i_1\cdots i_m}) \in T_{m, n}$.   We say that $\mathcal{A}$ is\begin{itemize}
\item[(i)]  a {\bf R-tensor} iff the following system is inconsistent
\begin{equation}\label{eq:21}\begin{cases}
0\ne\x\geq0, \ t\geq0\\ \left(\mathcal{A} \x^{m-1}\right)_i+t=0 \mbox{ if } x_i>0,\\
\left(\mathcal{A} \x^{m-1}\right)_j+t\geq0 \mbox{  if } x_j=0;\end{cases}\end{equation}
\item[(ii)] a {\bf R$_0$-tensor} iff the system (\ref{eq:21}) is inconsistent for $t=0$.\end{itemize}
\end{Definition}

Clearly, Definition \ref{d21} is a natural extension of the definition of  Karamardian's class of regular matrices \cite{K72}.

\begin{Definition} \label{d22}\em
	Let $\mathcal{A}  = (a_{i_1\cdots i_m}) \in T_{m, n}$.   $\mathcal{A}$ is said to be\begin{itemize}
\item[(i)] {\bf semi-positive} iff for each $\x\geq0$ and $\x\ne\0$, there exists an index $k\in I_n$ such that $$x_k>0\mbox{ and }\left(\mathcal{A} \x^{m-1}\right)_k\geq0;$$
\item[(ii)]  {\bf strictly semi-positive} iff for each $\x\geq\0$ and $\x\ne\0$, there exists an index $k\in I_n$ such that $$x_k>0\mbox{ and }\left(\mathcal{A} \x^{m-1}\right)_k>0;$$
\item[(iii)] a {\bf P-tensor}(Song and Qi \cite{SQ-15}) iff for each $\x$ in $\mathbb{R}^n$ and $\x\ne\0$, there
	exists $i \in I_n$ such that
	$$x_i \left(\mathcal{A} \x^{m-1}\right)_i > 0;$$
\item[(iv)] a {\bf P$_0$-tensor}(Song and Qi \cite{SQ-15}) iff for every $\x$ in $\mathbb{R}^n$ and $\x\ne\0$, there
	exists $i \in I_n$ such that $x_i \not = 0$ and
	$$x_i \left(\mathcal{A} \x^{m-1}\right)_i \geq 0.$$\end{itemize}
\end{Definition}
Clearly, each P$_0$-tensor is certainly semi-positive. The concept of P-(P$_0$-)tensor is introduced by Song and Qi \cite{SQ-15}. Furthermore, Song and Qi \cite{SQ-15} studied some nice properties of such a class of tensors. The definition of (strictly) semi-positive tensor is a natural extension of the concept of  (strictly) semi-positive (or semi-monotone)  matrices \cite{E71,FP66}.

It follows from Definitions \ref{d21} and \ref{d22} that
each P-tensor must be strictly semi-positive and every strictly semi-positive tensor is certainly both R-tensor and R$_0$-tensor. Now we give several examples to demonstrate that the above inclusions are proper.
\begin{Example}\label{e21}\em
Let $\hat{\mathcal{A}}  = (a_{i_1\cdots i_m}) \in T_{m, n}$ and $a_{i_1\cdots i_m}=1$ for all $i_1,i_2,\cdots, i_m\in I_n$. Then
$$\left(\hat{\mathcal{A}} \x^{m-1}\right)_i=(x_1+x_2+\cdots+x_n)^{m-1}$$ for all $i\in I_n$ and hence $\hat{\mathcal{A}}$ is strictly semi-positive. However, $\hat{\mathcal{A}}$ is not a P-tensor (for example, $x_i\left(\hat{\mathcal{A}} \x^{m-1}\right)_i=0$ for  $\x=(1,-1,0,\cdots,0)^\top$ and all $i\in I_n$).
\end{Example}
\begin{Example}\label{e22}\em
Let $\tilde{\mathcal{A}}  = (a_{i_1i_2i_3}) \in T_{3, 2}$ and $a_{111}=1$, $a_{122}=-1$,$a_{211}=-2$, $a_{222}=1$ and all other $a_{i_1i_2i_3}=0$. Then
$$\tilde{\mathcal{A}} \x^2=\left(\begin{aligned}x_1^2&-x_2^2\\
-2x_1^2&+x_2^2\end{aligned}\right).$$ Clearly,  $\tilde{\mathcal{A}}$ is not strictly semi-positive (for example, $\left(\tilde{\mathcal{A}} \x^2\right)_1=0$ and $\left(\tilde{\mathcal{A}} \x^2\right)_2=-1$ for  $\x=(1,1)^\top$).

$\tilde{\mathcal{A}}$ is a R$_0$-tensor. In fact,  \begin{itemize}
\item[(i)] if $x_1>0$,  $\left(\tilde{\mathcal{A}} \x^2\right)_1=x_1^2-x_2^2=0$. Then $x_2^2=x_1^2$, and so $x_2>0$, but $\left(\tilde{\mathcal{A}} \x^2\right)_2=-2x_1^2+x_2^2=-x_1^2<0$;
\item[(ii)]  if $x_2>0$,  $\left(\tilde{\mathcal{A}} \x^2\right)_2=-2x_1^2+x_2^2=0$. Then $x_1^2=\frac12x_2^2>0$, but  $\left(\tilde{\mathcal{A}} \x^2\right)_1=x_1^2-x_2^2=-\frac12x_2^2<0$.\end{itemize}

$\tilde{\mathcal{A}}$ is not a R-tensor. In fact,
 if $x_1>0$,  $\left(\tilde{\mathcal{A}} \x^2\right)_1+t=x_1^2-x_2^2+t=0$. Then $x_2^2=x_1^2+t>0$, and so $x_2>0$,   $\left(\tilde{\mathcal{A}} \x^2\right)_2+t=-2x_1^2+x_2^2+t=-x_1^2+2t$. Taking $x_1=a>0$, $t=\frac12a^2$ and $x_2=\frac{\sqrt{6}}2a$. That is, $\x=a(1,\frac{\sqrt{6}}2)^\top$ and $t=\frac12a^2$ solve  the system (\ref{eq:21}).
\end{Example}
\begin{Example}\label{e23}\em
Let $\bar{\mathcal{A}}  = (a_{i_1i_2i_3}) \in T_{3, 2}$ and $a_{111}=-1$, $a_{122}=1$,$a_{211}=-2$, $a_{222}=1$ and all other $a_{i_1i_2i_3}=0$. Then
$$\bar{\mathcal{A}} \x^2=\left(\begin{aligned}-x_1^2&+x_2^2\\
-2x_1^2&+x_2^2\end{aligned}\right).$$ Clearly,  $\bar{\mathcal{A}}$ is not strictly semi-positive (for example, $\x=(1,1)^\top$).

 $\bar{\mathcal{A}}$ is a R-tensor. In fact,  \begin{itemize}
\item[(i)] if $x_1>0$,  $\left(\bar{\mathcal{A}} \x^2\right)_1+t=-x_1^2+x_2^2+t=0$. Then $x_2^2=x_1^2-t$, but $\left(\bar{\mathcal{A}} \x^2\right)_2+t=-2x_1^2+x_2^2+t=-x_1^2<0$;
\item[(ii)] if $x_2>0$,  $\left(\bar{\mathcal{A}} \x^2\right)_2+t=-2x_1^2+x_2^2+t=0$. Then $x_1^2=\frac12(x_2^2+t)>0$, but  $\left(\bar{\mathcal{A}} \x^2\right)_1+t=-x_1^2+x_2^2+t=\frac12(x_2^2+t)>0$.\end{itemize}

$\bar{\mathcal{A}}$ is a R$_0$-tensor. In fact,  \begin{itemize}
\item[(i)] if $x_1>0$,  $\left(\bar{\mathcal{A}} \x^2\right)_1=-x_1^2+x_2^2=0$. Then $x_2^2=x_1^2$, and so $x_2>0$, but $\left(\bar{\mathcal{A}} \x^2\right)_2=-2x_1^2+x_2^2=-x_1^2<0$;
\item[(ii)] if $x_2>0$,  $\left(\bar{\mathcal{A}} \x^2\right)_2=-2x_1^2+x_2^2=0$. Then $x_1^2=\frac12x_2^2>0$, but  $\left(\bar{\mathcal{A}} \x^2\right)_1=-x_1^2+x_2^2=\frac12x_2^2>0$.\end{itemize}
\end{Example}

\begin{Lemma}{\em (\cite[Corollary 3.5]{BP})}\label{le:21}\em Let $S=\{\x\in\mathbb{R}^{n+1}_+;\sum\limits_{i=1}^{n+1}x_i=1\}$. Assumed that $F:S\to \mathbb{R}^{n+1}$ is continuous on $S$. Then there exists $\bar{\x}\in S$ such that
\begin{align}
\x^\top F(\bar{\x})&\geq \bar{\x}^\top F(\bar{\x})\ \mbox{\em for all }\ \x\in S\\
\left(F(\bar{\x})\right)_k&=\min_{i\in I_{n+1}}\left(F(\bar{\x})\right)_i=\omega\ \mbox{\em if }\ x_k>0,\\
\left(F(\bar{\x})\right)_k&\geq \omega\ \mbox{\em if }\ x_k=0.
\end{align}
\end{Lemma}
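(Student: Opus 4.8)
The plan is to read the first assertion as a Stampacchia variational inequality on the unit simplex $S$, establish it via Brouwer's fixed point theorem, and then read off the two pointwise assertions from the vertex structure of $S$.

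First I would rewrite $\x^\top F(\bar{\x}) \geq \bar{\x}^\top F(\bar{\x})$ for all $\x \in S$ in the equivalent form $\langle F(\bar{\x}), \x - \bar{\x}\rangle \geq 0$ for all $\x \in S$, which is exactly the variational inequality for $F$ on $S$. To produce such a $\bar{\x}$, let $P_S$ denote the Euclidean projection onto the nonempty compact convex set $S$; it is continuous (in fact nonexpansive). Since $F$ is continuous, the self-map $T(\x) = P_S(\x - F(\x))$ of $S$ is continuous, so Brouwer's theorem yields a fixed point $\bar{\x} = P_S(\bar{\x} - F(\bar{\x}))$. The obtuse-angle characterization of the projection, $\langle (\bar{\x} - F(\bar{\x})) - \bar{\x},\, \x - \bar{\x}\rangle \leq 0$ for every $\x \in S$, simplifies to $\langle F(\bar{\x}), \x - \bar{\x}\rangle \geq 0$, which is the first assertion.

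Next I would test this inequality at the vertices of $S$. Writing $\uu^{(i)}$ for the $i$-th standard unit vector (so that $\uu^{(i)} \in S$), the choice $\x = \uu^{(i)}$ gives $(F(\bar{\x}))_i \geq \bar{\x}^\top F(\bar{\x})$ for each $i \in I_{n+1}$, hence $\omega := \min_{i} (F(\bar{\x}))_i \geq \bar{\x}^\top F(\bar{\x})$. Conversely, $\bar{\x}^\top F(\bar{\x}) = \sum_{k} \bar{x}_k (F(\bar{\x}))_k$ is a convex combination of the components of $F(\bar{\x})$ (since $\bar{x}_k \geq 0$ and $\sum_k \bar{x}_k = 1$), so $\bar{\x}^\top F(\bar{\x}) \geq \omega$; therefore $\bar{\x}^\top F(\bar{\x}) = \omega$. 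The identity $\sum_k \bar{x}_k\big((F(\bar{\x}))_k - \omega\big) = 0$ with all summands nonnegative forces $(F(\bar{\x}))_k = \omega$ whenever $\bar{x}_k > 0$, which is the second assertion. The third assertion, $(F(\bar{\x}))_k \geq \omega$ when $\bar{x}_k = 0$, is immediate because $\omega$ is by definition the minimum of the components.

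The hard part is the existence step: because $F$ need not be the gradient of any scalar potential, I cannot reduce the problem to minimizing a single real-valued function over the compact set $S$ by Weierstrass, and a genuine fixed-point argument (Brouwer, routed through the projection map) seems unavoidable. Once the variational inequality is secured, the passage to the componentwise minimum conditions is purely a matter of the simplex geometry and is routine.
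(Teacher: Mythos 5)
Your proof is correct. Note first that the paper itself gives no proof of this lemma: it is imported verbatim as a cited result (\cite[Corollary 3.5]{BP}), so there is no in-paper argument to compare against. Your route is the standard one for precisely this kind of statement: the first assertion is the Hartman--Stampacchia variational inequality on the compact convex set $S$, proved by applying Brouwer's fixed point theorem to $T(\x)=P_S(\x-F(\x))$ and invoking the obtuse-angle characterization of the projection; this is exactly how the result is established in the complementarity literature (e.g.\ in Facchinei--Pang). Your passage from the variational inequality to the two componentwise conditions is also complete and airtight: testing at the vertices $\uu^{(i)}\in S$ gives $\omega\geq\bar{\x}^\top F(\bar{\x})$, the convex-combination bound gives the reverse inequality, and then $\sum_k \bar{x}_k\bigl((F(\bar{\x}))_k-\omega\bigr)=0$ with nonnegative summands forces $(F(\bar{\x}))_k=\omega$ whenever $\bar{x}_k>0$, while the third condition is trivial from the definition of $\omega$. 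One small point worth flagging: as printed, the lemma's conditions are stated with ``$x_k>0$'' and ``$x_k=0$,'' which is a typo for $\bar{x}_k$; your proof silently (and correctly) reads them as conditions on the components of $\bar{\x}$, which is also how the paper uses the lemma in the proof of its Theorem 3.2.
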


Recall that a tensor $\mathcal{C} \in T_{m, r}$  is called {\bf a principal sub-tensor}  of a tensor $\mathcal{A} = (a_{i_1\cdots i_m}) \in T_{m, n}$ ($1 \le r\leq n$) iff there is a set $J$ that composed of $r$ elements in $I_n$ such that
 $$\mathcal{C} = (a_{i_1\cdots i_m}),\mbox{ for all } i_1, i_2, \cdots, i_m\in J.$$ The concept was first introduced and used in \cite{Qi} for symmetric tensor. We denote by $\mathcal{A}^J_r$ the principal sub-tensor of a tensor $\mathcal{A} \in T_{m, n}$ such that the entries of  $\mathcal{A}^J_r$ are indexed by $J \subset I_n$ with $|J|=r$ ($1 \le r\leq n$), and denote by $\x_J$ the $r$-dimensional sub-vector of a vector $\x \in \mathbb{R}^n$, with the components of $\x_J$ indexed by $J$.   Note that for $r=1$, the principal sub-tensors are just the diagonal entries.

\begin{Definition} \label{d23}\em
	Let $\mathcal{A}  = (a_{i_1\cdots i_m}) \in S_{m, n}$.  $\mathcal{A}$ is said to be \begin{itemize}
\item[(i)] {\bf copositive } if $\mathcal{A}x^m\geq0$ for all $x\in \mathbb{R}^n_+$;
\item[(ii)] {\bf strictly copositive} if  $\mathcal{A}x^m>0$ for all $x\in \mathbb{R}^n_+\setminus\{0\}$.\end{itemize}
\end{Definition}

  The concept of (strictly) copositive tensors was first introduced by Qi in \cite{Qi1}.  Song and Qi \cite{SQ15} showed their equivalent definition and some special structures. The following lemma is one of the structure conclusions  of (strictly) copositive tensors in \cite{SQ15}.

\begin{Lemma}\em { (\cite[Corollary 4.6]{SQ15})}\label{le:22}
Let $\mathcal{A} = (a_{i_1\cdots i_m}) \in S_{m, n}$. Then \begin{itemize}
\item[(i)] If $\mathcal{A}$ is  copositive, then $a_{ii\cdots i}\geq0$ for all $i\in I_n.$
\item[(ii)]If $\mathcal{A}$ is strictly copositive, then $a_{ii\cdots i}>0$ for all $i\in I_n.$
\end{itemize}
\end{Lemma}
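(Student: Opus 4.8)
The plan is to test the homogeneous form $\mathcal{A}\x^m$ against the standard coordinate vectors, since for these the form collapses to a single diagonal coefficient. Fix $i \in I_n$ and let $\uu_i \in \mathbb{R}^n_{+}$ denote the vector whose $i$th component is $1$ and whose remaining components are $0$; note $\uu_i \geq \0$ and $\uu_i \neq \0$.

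First I would compute $\mathcal{A}\uu_i^m$ directly from the definition. Since $(\uu_i)_j = 0$ whenever $j \neq i$, in the sum $\mathcal{A}\uu_i^m = \sum_{i_1,\cdots,i_m=1}^n a_{i_1\cdots i_m}(\uu_i)_{i_1}\cdots(\uu_i)_{i_m}$ every term in which some index $i_k \neq i$ vanishes. The only surviving term is the one with $i_1 = \cdots = i_m = i$, which contributes $a_{ii\cdots i}$. Hence $\mathcal{A}\uu_i^m = a_{ii\cdots i}$.

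Next I would invoke the hypothesis. For part (i), because $\uu_i \in \mathbb{R}^n_{+}$, copositivity (Definition \ref{d23}(i)) gives $\mathcal{A}\uu_i^m \geq 0$, that is $a_{ii\cdots i} \geq 0$. For part (ii), because $\uu_i \in \mathbb{R}^n_{+}\setminus\{\0\}$, strict copositivity (Definition \ref{d23}(ii)) gives $\mathcal{A}\uu_i^m > 0$, that is $a_{ii\cdots i} > 0$. Since $i \in I_n$ was arbitrary, both conclusions hold for every $i \in I_n$.

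There is no genuine obstacle in this argument: it is essentially a one-line evaluation once the coordinate vectors are chosen as the test points, and the only step needing (routine) care is verifying that the multilinear sum defining $\mathcal{A}\uu_i^m$ really collapses to the single purely-diagonal entry $a_{ii\cdots i}$. The essential insight is simply that coordinate vectors isolate the diagonal coefficients of the form, so any sign condition imposed on $\mathcal{A}\x^m$ over $\mathbb{R}^n_{+}$ transfers immediately to those entries.
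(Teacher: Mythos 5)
Your proof is correct: evaluating the form at the coordinate vector $\uu_i$ indeed collapses the sum to the single diagonal entry $a_{ii\cdots i}$, and (strict) copositivity then gives the sign condition immediately. The paper itself states this lemma as a citation to \cite[Corollary 4.6]{SQ15} without reproducing a proof, and your coordinate-vector argument is precisely the standard one underlying that result, so there is nothing to add.
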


\begin{Definition} \label{d25}\em Given a function $F : \mathbb{R}^n_+
\to \mathbb{R}^n,$  the nonlinear
complementarity problem,  denoted by NCP$(F)$, is to
\begin{equation}\label{eq:26}\mbox{ find a vector } \x \in \mathbb{R}^n\mbox{ such that }\x \geq \0, F(\x) \geq \0, \mbox{ and }\x^\top F(\x) = 0,\end{equation}\end{Definition}

It is well known that the nonlinear complementarity problems have been widely applied to the field of transportation planning, regional science, socio-economic analysis, energy modeling, and game theory. So over the past decades, the solutions of nonlinear complementarity problems have been rapidly studied in its theory of existence, uniqueness and algorithms. The following conclusion (Theorem \ref{th21}) is one of  the most fundamental results, which is showed with the help of the topological degree theory and the monotone properities of the function.

\begin{Definition} (\cite{HXQ,FP11}) \label{d26}\em   A mapping $F: K\subset\mathbb{R}^n \to \mathbb{R}^n$ is said to be
\begin{itemize}
\item[(i)] pseudo-monotone on $K$ if for all vectors $\x,\y \in K$,
$$( \x -\y )^\top F(\y) \geq 0\ \Rightarrow\ ( \x -\y )^\top F(x)\geq0;$$
\item[(ii)] monotone on $K$ if
$$( F(\x) - F(\y) )^\top ( \x - \y ) \geq 0, \forall x, y \in K;$$
\item[(iii)] strictly monotone on $K$ if
$$( F(\x) - F(\y) )^\top ( \x - \y ) >0, \forall x, y \in K \mbox{ and }x \ne y;$$
\item[(iv)] strongly monotone on $K$ if there exists a constant $c > 0$ such that
$$( F(\x)- F(\y) )^\top ( \x -\y ) \geq c \| \x- \y\|^2;$$
\item[(v)] a P$_0$ function on $K$ if for all pairs of distinct vectors $\x$ and $\y$ in $K$,
there exists $k \in I_n$ such that
$$x_k\ne y_k \mbox{ and }\left(x_k-y_k) (F(\x)-F(\y)\right)_k \geq 0;$$
\item[(vi)]  a P function on $K$ if for all pairs of distinct vectors $\x$ and $\y$ in $K$,
$$\max_{k\in I_n}( x_k -y_k) \left( F(\x) -F(\y) \right)_k> 0;$$
\item[(vii)]  a uniformly P function on $K$ if there exists a constant $c> 0$ such that
for all pairs of vectors $\x$ and $\y$ in $K$,
$$\max_{k\in I_n}( x_k-y_k) \left( F(\x)-F(\y) \right)_k\geq c\|x-y\|^2.$$
\end{itemize}
\end{Definition}

It follows from the above definition of the monotonicity and P properties that the following relations hold (see \cite{HXQ,FP11} for more details):  $$\begin{aligned}
\mbox{strongly }&\mbox{monotone }\Rightarrow \mbox{ strictly } &\mbox{monotone }\Rightarrow\mbox{ mono}&\mbox{tone }\Rightarrow\mbox{ pseudo-monotone}\\
\Downarrow& \ \ \ \ \ \ \ \ \ \ \ \ \ \ \ \ \ \ \ \ \ \ \ \ \ \ \ \ \Downarrow&\ \ \ \ \ \ \Downarrow&\\
\mbox{uniformly }&\mbox{P function }\Rightarrow \mbox{\ \ \ \ \ \ \ \ P  }&\mbox{function }\Rightarrow\mbox{ P$_0$ }&\mbox{function}
\end{aligned}$$


Now we give an example to certify the function deduced by a R-tensor is neither pseudo-monotone nor a P$_0$ function. 

 \begin{Example} \label{e24}\em Let $\bar{\mathcal{A}}$ be a R-tensor defined by Example \ref{e23} and let $F(\x)=\bar{\mathcal{A}} \x^2+\q$, where $\q=(\frac12,\frac12)^\top$. Then  $F$ is neither pseudo-monotone nor P$_0$ function. In fact, $$F(\x)=\bar{\mathcal{A}} \x^2+\q=\left(\begin{aligned}-x_1^2&+x_2^2+\frac12\\
-2x_1^2&+x_2^2+\frac12\end{aligned}\right).$$ Let $\x=(1,0)^\top$ and  $\y=(0,\frac14)^\top$. Then $$\x-\y=\left(\begin{aligned}1\\
-\frac14\end{aligned}\right),\ F(\x)=\left(\begin{aligned}-\frac12\\
-\frac32\end{aligned}\right)\mbox{ and }F(\y)=\left(\begin{aligned}\frac9{16}\\
\frac9{16}\end{aligned}\right).$$ Clearly, we have $$(\x-\y)^\top F(\y)=1\times\frac9{16}-\frac14 \times \frac9{16}>0.$$  However, $$(\x-\y)^\top F(\x)=-\frac12-\frac14 \times (-\frac32)<0,$$ and hence $F$ is not pseudo-monotone.


Take $\x=(1,1)^\top$ and  $\y=(0,\frac14)^\top$. Then $$\x-\y=\left(\begin{aligned}1\\
-\frac14\end{aligned}\right),\ F(\x)=\left(\begin{aligned}\frac12\\
-\frac12\end{aligned}\right)\mbox{ and }F(\y)=\left(\begin{aligned}\frac9{16}\\
\frac9{16}\end{aligned}\right).$$ Clearly, we have $$(x_1-y_1) \left(F(\x)-F(\y)\right)_1=1\times(\frac12-\frac9{16})<0$$  and  $$(x_2-y_2)\left(F(\x)-F(\y)\right)_2=\frac34 \times (-\frac12-\frac9{16})<0,$$ and hence $F$ is not a P$_0$ function.
\end{Example}

\begin{Remark}\em Let $\mathcal{A}\in T_{m,n}$ and $F(\x)=\mathcal{A} \x^{m-1}$.  
	Taking $\y=\0$ and $\x\in\mathbb{R}^n_+$ in Definition \ref{d26}(vi), we obtain that $\mathcal{A}$ is P-tensor if $F$ is a P function. So $\mathcal{A}$ must be a R-tensor if $F(\x)=\mathcal{A} \x^{m-1}$ is a P function. The Example \ref{e21} means that the inverse implication is not true.
\end{Remark}

Next we will show our  main result: each R-tensor $\mathcal{A}$ is a Q-tensor. That is, the  nonlinear complementarity problem, \begin{equation}\label{eq:37}\mbox{ find } \x \in \mathbb{R}^n\mbox{ such that }\x \geq \0, F(\x)=\mathcal{A} \x^{m-1}+\q \geq \0, \mbox{ and }\x^\top F(\x) = 0,\end{equation} has a  solution for each vector $\q \in \mathbb{R}^n$.

\section{Tensor Complementarity Problem and Some Classes of Structured Tensors}

\hspace{4mm}

We first give the equivent definition of  R$_0$-tensor (R-tensor) by means of the tensor complementarity problem.

\begin{Proposition} \label{th:31}\em Let $\mathcal{A}  = (a_{i_1\cdots i_m}) \in T_{m, n}$. Then
	\begin{itemize}
		\item[(i)] $\mathcal{A}$ is a R$_0$-tensor if and only if the tensor complementarity problem $(\0, \mathcal{A})$ has a unique solution $\0$;
		\item[(ii)] $\mathcal{A}$ is a R-tensor if and only if it is a R$_0$-tensor and the tensor complementarity problem $(\e, \mathcal{A})$ has a unique solution $\0$, where $\e=(1,1\cdots,1)^\top$.
		\end{itemize}	
\end{Proposition}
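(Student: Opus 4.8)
The plan is to observe that, for any $\q$, the complementarity equation in $(\q,\mathcal{A})$ forces \emph{componentwise} complementarity: since $\x\geq\0$ and $\q+\mathcal{A}\x^{m-1}\geq\0$, the scalar $\x^\top(\q+\mathcal{A}\x^{m-1})=\sum_i x_i(\q+\mathcal{A}\x^{m-1})_i$ is a sum of nonnegative terms, so it vanishes if and only if $x_i(\q+\mathcal{A}\x^{m-1})_i=0$ for every $i$. Thus a vector $\x$ solves $(\q,\mathcal{A})$ exactly when $x_i>0\Rightarrow(\mathcal{A}\x^{m-1})_i=-q_i$ and $x_i=0\Rightarrow(\mathcal{A}\x^{m-1})_i\geq-q_i$. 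I would also record at the outset that $\0$ always solves $(\q,\mathcal{A})$ whenever $\q\geq\0$ (in particular for $\q=\0$ and $\q=\e$), so in both parts ``unique solution $\0$'' means the same as ``no nonzero solution''.

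For part (i), specializing this to $\q=\0$ shows at once that a nonzero $\x$ solves $(\0,\mathcal{A})$ iff $x_i>0\Rightarrow(\mathcal{A}\x^{m-1})_i=0$ and $x_i=0\Rightarrow(\mathcal{A}\x^{m-1})_i\geq0$, which is precisely the system (\ref{eq:21}) with $t=0$. Hence $(\0,\mathcal{A})$ has a nonzero solution iff that system is consistent, i.e. iff $\mathcal{A}$ is \emph{not} an R$_0$-tensor; contrapositively, $\mathcal{A}$ is an R$_0$-tensor iff $\0$ is its only solution. This part is purely a translation and carries no real difficulty.

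For part (ii), I would first note that the R-tensor condition (system (\ref{eq:21}) inconsistent for all $t\geq0$) trivially includes the $t=0$ case, so every R-tensor is an R$_0$-tensor; it then remains to match the solutions of $(\e,\mathcal{A})$ with the solutions of (\ref{eq:21}) for $t>0$. Specializing the componentwise complementarity to $\q=\e$ shows that a nonzero $\x$ solves $(\e,\mathcal{A})$ iff $(\x,t=1)$ solves (\ref{eq:21}); this already gives the forward direction, since inconsistency for every $t\geq0$ forces inconsistency at $t=1$, so $(\e,\mathcal{A})$ has no nonzero solution. The converse is where the one genuine idea enters. Suppose $\mathcal{A}$ is an R$_0$-tensor and $(\e,\mathcal{A})$ has only the trivial solution, yet (\ref{eq:21}) is consistent for some $t\geq0$ with witness $\x\neq\0$. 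The case $t=0$ is excluded by the R$_0$ hypothesis, so $t>0$, and here I would exploit the homogeneity $\mathcal{A}(\lambda\x)^{m-1}=\lambda^{m-1}\mathcal{A}\x^{m-1}$: setting $\lambda=t^{-1/(m-1)}>0$ and $\y=\lambda\x$, the relations $(\mathcal{A}\x^{m-1})_i=-t$ for $x_i>0$ and $(\mathcal{A}\x^{m-1})_j\geq-t$ for $x_j=0$ rescale to $(\mathcal{A}\y^{m-1})_i=-1$ and $(\mathcal{A}\y^{m-1})_j\geq-1$, so $\y\neq\0$ solves $(\e,\mathcal{A})$, a contradiction. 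Combining the two directions, $\mathcal{A}$ is an R-tensor iff (\ref{eq:21}) is inconsistent both at $t=0$ (the R$_0$ property) and at every $t>0$ (triviality of $(\e,\mathcal{A})$).

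The main obstacle, and really the only nonroutine step, is the rescaling in the converse of (ii): one must recognize that the scalar $t$ in (\ref{eq:21}) can be normalized to $1$ using the degree-$(m-1)$ homogeneity of $\x\mapsto\mathcal{A}\x^{m-1}$, which is exactly what converts a solution at an arbitrary positive $t$ into a solution of the specific problem $(\e,\mathcal{A})$. The choice $\lambda=t^{-1/(m-1)}$ is legitimate precisely because $t>0$ and $m\geq2$, and the inequality at the zero components of $\x$ survives the rescaling since $\lambda^{m-1}>0$.
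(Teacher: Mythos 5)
Your proof is correct and takes essentially the same route as the paper's: part (i) is the direct componentwise translation of the complementarity conditions at $\q=\0$, the forward half of (ii) is the specialization to $t=1$ (with $t=0$ giving the R$_0$ property), and your rescaling $\y=t^{-1/(m-1)}\x$ is exactly the paper's substitution $\x/\sqrt[m-1]{t}$ in the converse. Your write-up merely makes explicit two points the paper leaves implicit, namely the reduction of $\x^\top(\q+\mathcal{A}\x^{m-1})=0$ to componentwise complementarity and the homogeneity argument justifying the normalization of $t$.
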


\begin{proof}
	(i)  The tensor complementarity problem $(\0, \mathcal{A})$ have not non-zero vector solution if and only if the system $$\begin{cases}
		0\ne\x=(x_1,\cdots,x_n)^\top\geq0,  \\
		\left(\mathcal{A} \x^{m-1}\right)_i=0 \mbox{ if } x_i>0,\\
		\left(\mathcal{A} \x^{m-1}\right)_i\geq0 \mbox{  if } x_i=0\end{cases}$$
has no solution. So the conclusion is proved.

(ii)  It follows from the Definition \ref{d21} that the necessity is obvious ($t=1$).

Conversely， suppose $\mathcal{A}$ is not a R-tensor. Then there exists $\x\in\mathbb{R}^n_+\setminus \{\0\}$ satisfying  the system (\ref{eq:21}). That is, the tensor complementarity problem $(t\e, \mathcal{A})$ has non-zero vector solution $\x$ for some $t\geq0$. We have $t>0$ since $\mathcal{A}$ is a R$_0$-tensor. So the tensor complementarity problem $(\e, \mathcal{A})$ has non-zero vector solution $\dfrac{\x}{\sqrt[m-1]{t}}$, a contradiction.
\end{proof}

Now we give the following result which may be obtained by Proposition \ref{th:31} together with the main results of Karamardian \cite{K76}. For complete, we give another proof using the similar proof technique in Berman and Plemmons \cite[Theorem 3.6]{BP}.

\begin{Corollary}\label{co:31}\em Let $\mathcal{A}  = (a_{i_1\cdots i_m}) \in T_{m, n}$ be a R-tensor.
Then $\mathcal{A}$ is a Q-tensor. That is, the tensor complementarity problem $(\q, \mathcal{A})$ has a  solution for all $\q \in \mathbb{R}^n$.
 \end{Corollary}

 \begin{proof}
 Let the mapping $F:\mathbb{R}^{n+1}_+\to \mathbb{R}^{n+1}$ be defined by
 \begin{equation}\label{eq:31}
 F(\y)=\left(\begin{aligned}
 \mathcal{A}\x^{m-1}&+s\q+s\e\\
 &s
 \end{aligned}\right),
 \end{equation}
 where $\y=(\x,s)^\top$, $\x\in\mathbb{R}^n_+$, $s\in\mathbb{R}_+$ and $\e=(1,1,\cdots,1)^\top\in\mathbb{R}^n,\ \q\in\mathbb{R}^n$. Obviously, $F:S\to \mathbb{R}^{n+1}$  is continuous on the set $S=\{\x\in\mathbb{R}^{n+1}_+;\sum\limits_{i=1}^{n+1}x_i=1\}$. It follows from Lemma \ref{le:21} that there exists $\tilde{\y}=(\tilde{\x},\tilde{s})^\top\in S$ such that \begin{align}
\y^\top F(\tilde{\y})&\geq \tilde{\y}^\top F(\tilde{\y})\ \mbox{ for all }\ \y\in S\label{eq:32}\\
\left(F(\tilde{\y})\right)_k&=\min_{i\in I_{n+1}}\left(F(\tilde{\y})\right)_i=\omega\ \mbox{ if }\ \tilde{y}_k>0,\label{eq:33}\\
\left(F(\tilde{\y})\right)_k&\geq \omega\ \mbox{ if }\ \tilde{y}_k=0.\label{eq:34}
\end{align}

We claim $\tilde{s}>0$. Suppose $\tilde{s}=0$.   Then the fact that $\tilde{y}_{n+1}=\tilde{s}=0$ together with (\ref{eq:34}) implies that $$\omega\leq\left(F(\tilde{\y})\right)_{n+1}=\tilde{s}=0,$$   and so for $k\in I_n$,
$$\begin{aligned}
\left(F(\tilde{\y})\right)_k&=\left(\mathcal{A}\tilde{\x}^{m-1}\right)_k=\omega\ \mbox{ if }\ \tilde{x}_k>0,\\
\left(F(\tilde{\y})\right)_k&=\left(\mathcal{A}\tilde{\x}^{m-1}\right)_k\geq \omega\ \mbox{ if }\ \tilde{x}_k=0.
\end{aligned}$$
That is, for $t=-\omega\geq0$,
$$\begin{aligned}
\left(\mathcal{A}\tilde{\x}^{m-1}\right)_k+t=0\ \mbox{ if }\ \tilde{x}_k>0,\\
\left(\mathcal{A}\tilde{\x}^{m-1}\right)_k+t\geq 0\ \mbox{ if }\ \tilde{x}_k=0.
\end{aligned}$$
This obtains a contradiction with the definition of R-tensor $\mathcal{A}$, which completes the proof of the claim.

 Now we show that the tensor complementarity problem $(\q, \mathcal{A})$ has a solution for all $\q \in \mathbb{R}^n$. In fact, if $\q\geq\0$, clearly $\z=\0$ and $\w=\mathcal{A}\z^{m-1}+\q=\q$ solve $(\q, \mathcal{A})$. Next we consider $\q\in \mathbb{R}^n/\mathbb{R}^n_+$. It follows from (\ref{eq:31}), (\ref{eq:33}) and (\ref{eq:34}) that  $$\left(F(\tilde{\y})\right)_{n+1}=\min_{i\in I_{n+1}}\left(F(\tilde{\y})\right)_i=\omega=\tilde{s}=\tilde{y}_{n+1}>0$$ and for $i\in I_n$,
  $$\begin{aligned}
\left(F(\tilde{\y})\right)_i&=\left(\mathcal{A}\tilde{\x}^{m-1}\right)_i+\tilde{s}q_i+\tilde{s}=\omega=\tilde{s}\ \mbox{ if }\ \tilde{y}_i=\tilde{x}_i>0,\\
\left(F(\tilde{\y})\right)_i&=\left(\mathcal{A}\tilde{\x}^{m-1}\right)_i+\tilde{s}q_i+\tilde{s}\geq \omega= \tilde{s}\ \mbox{ if }\ \tilde{y}_i=\tilde{x}_i=0.
\end{aligned}$$
Thus  for $\z=\frac{\tilde{\x}}{\tilde{s}^{\frac1{m-1}}}$ and $i\in I_n$, we have $$\begin{aligned}
\left(\mathcal{A}\z^{m-1}\right)_i+q_i=0\ \mbox{ if }\ z_i>0,\\
\left(\mathcal{A}\z^{m-1}\right)_i+q_i\geq 0\ \mbox{ if }\ z_i=0,
\end{aligned}$$
  and hence, $$\z \geq \0, \w=\q + \mathcal{A}\z^{m-1} \geq \0, \mbox{ and }\z^\top \w = 0.$$
  So we obtain a feasible solution $(\z,\w)$ of the tensor complementarity problem $(\q, \mathcal{A})$, and then $\mathcal{A}$ is a Q-tensor.
 The theorem is proved.
 \end{proof}
\begin{Corollary}\label{co:32}\em Each strictly semi-positive tensor is a Q-tensor, and so is P-tensor. That is, the tensor complementarity problem $(\q, \mathcal{A})$  has a  solution for all $\q \in \mathbb{R}^n$ if $\mathcal{A}$ is either a P-tensor or a strictly semi-positive tensor.
 \end{Corollary}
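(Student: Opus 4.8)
The plan is to obtain this statement as an immediate consequence of Theorem \ref{th:31} together with the two class inclusions
$$\{\mbox{P-tensors}\}\subseteq\{\mbox{strictly semi-positive tensors}\}\subseteq\{\mbox{R-tensors}\},$$
which were already announced right after Definition \ref{d22}. Since Theorem \ref{th:31} shows that every R-tensor is a Q-tensor, it suffices to verify these two inclusions carefully and then chain them with the theorem.

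First I would confirm that a strictly semi-positive tensor $\mathcal{A}$ is an R-tensor, arguing by contradiction against the system (\ref{eq:21}). Suppose (\ref{eq:21}) were consistent, so that some pair $(\x,t)$ with $\x\geq\0$, $\x\ne\0$ and $t\geq0$ satisfies $\left(\mathcal{A}\x^{m-1}\right)_i+t=0$ for every index $i$ with $x_i>0$. Applying the definition of strict semi-positivity to this particular $\x$ yields an index $k$ with $x_k>0$ and $\left(\mathcal{A}\x^{m-1}\right)_k>0$. But for that same $k$ the first case of (\ref{eq:21}) forces $\left(\mathcal{A}\x^{m-1}\right)_k=-t\leq0$, a contradiction. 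Hence (\ref{eq:21}) is inconsistent and $\mathcal{A}$ is an R-tensor.

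Next I would check that a P-tensor is strictly semi-positive. Given any $\x\geq\0$ with $\x\ne\0$, the P-tensor property supplies an index $i$ with $x_i\left(\mathcal{A}\x^{m-1}\right)_i>0$; since $x_i\geq0$, the positivity of this product already forces both $x_i>0$ and $\left(\mathcal{A}\x^{m-1}\right)_i>0$, which is exactly the requirement in the definition of strict semi-positivity. Combining the two inclusions with Theorem \ref{th:31} then gives that every strictly semi-positive tensor, and in particular every P-tensor, is a Q-tensor, so the tensor complementarity problem $(\q,\mathcal{A})$ has a solution for all $\q\in\mathbb{R}^n$.

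I do not expect any genuine obstacle at the level of this corollary: all of the analytic work—the appeal to the fixed-point type Lemma \ref{le:21} and the construction of a strictly feasible point in the enlarged $(n+1)$-dimensional complementarity problem—has already been carried out inside the proof of Theorem \ref{th:31}. The only point deserving attention is purely a matter of sign bookkeeping, namely that the two inclusions above must hold for arbitrary order $m$ and not merely for the matrix case $m=2$; but since the notions of R-tensor, strict semi-positivity and P-tensor are all phrased coordinatewise in terms of the components $\left(\mathcal{A}\x^{m-1}\right)_i$, the short arguments above are insensitive to $m$ and go through verbatim.
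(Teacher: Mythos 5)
Your proposal is correct and follows exactly the paper's route: the paper states this corollary without a separate proof, treating it as an immediate consequence of Theorem \ref{th:31} together with the inclusions (P-tensor $\Rightarrow$ strictly semi-positive $\Rightarrow$ R-tensor) announced after Definition \ref{d22}. Your verification of those two inclusions is sound and simply makes explicit what the paper leaves to the reader.
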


\begin{Theorem}\label{th:33}\em Let a R$_0$-tensor $\mathcal{A} (\in T_{m, n})$ be semi-positive.
Then $\mathcal{A}$ is a R-tensor, and hence $\mathcal{A}$ is a Q-tensor.
 \end{Theorem}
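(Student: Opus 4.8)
The plan is to establish the conclusion in two stages, the second of which is free: once I have shown that $\mathcal{A}$ is an R-tensor, Theorem \ref{th:31} immediately upgrades this to the assertion that $\mathcal{A}$ is a Q-tensor. So the entire content lies in proving that a semi-positive R$_0$-tensor is an R-tensor, and I would do this by contradiction, unwinding Definitions \ref{d21} and \ref{d22}.

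First I would assume, for contradiction, that $\mathcal{A}$ is \emph{not} an R-tensor. By Definition \ref{d21}(i) this means the system (\ref{eq:21}) is consistent: there exist $\x \geq \0$ with $\x \neq \0$ and a scalar $t \geq 0$ such that $\left(\mathcal{A}\x^{m-1}\right)_i + t = 0$ for every $i$ with $x_i > 0$, and $\left(\mathcal{A}\x^{m-1}\right)_j + t \geq 0$ for every $j$ with $x_j = 0$. The crucial use of the R$_0$ hypothesis is to pin down the sign of $t$: since $\mathcal{A}$ is an R$_0$-tensor, Definition \ref{d21}(ii) tells us that (\ref{eq:21}) is inconsistent when $t = 0$; hence our solution must have $t > 0$ strictly.

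Now I would exploit the equality constraints. For every index $i$ with $x_i > 0$ we have $\left(\mathcal{A}\x^{m-1}\right)_i = -t$, and because $t > 0$ this gives $\left(\mathcal{A}\x^{m-1}\right)_i < 0$ for \emph{all} such $i$. On the other hand, semi-positivity (Definition \ref{d22}(i)) applied to this same nonzero $\x \geq \0$ produces an index $k$ with $x_k > 0$ and $\left(\mathcal{A}\x^{m-1}\right)_k \geq 0$. These two statements are contradictory at the index $k$, which forces the system (\ref{eq:21}) to be inconsistent after all; that is, $\mathcal{A}$ is an R-tensor, and Theorem \ref{th:31} finishes the argument.

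Since the proof is essentially a matter of chasing the definitions, there is no serious computational obstacle. The one step that must be stated with care, and which I regard as the heart of the argument, is the deduction that $t > 0$: this is precisely where the R$_0$ assumption enters, and without it the semi-positivity condition alone would not clash with (\ref{eq:21}) (the case $t=0$ would be allowed, and then $\left(\mathcal{A}\x^{m-1}\right)_i = 0 \geq 0$ would be compatible with semi-positivity). Everything else is a direct comparison of signs at the distinguished index $k$ supplied by semi-positivity.
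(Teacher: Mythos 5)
Your proof is correct and follows essentially the same route as the paper's: suppose $\mathcal{A}$ is not an R-tensor, use the R$_0$ hypothesis to force $t>0$ in a solution of the system (\ref{eq:21}), and then observe that $\left(\mathcal{A}\x^{m-1}\right)_i=-t<0$ at every index with $x_i>0$ contradicts semi-positivity, after which Theorem \ref{th:31} yields the Q-tensor conclusion. Your write-up is in fact slightly more careful than the paper's, since it explicitly invokes the index $k$ furnished by Definition \ref{d22}(i) where the signs clash.
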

 \begin{proof}
 Suppose that $\mathcal{A}$ is not a R-tensor. Let the system (\ref{eq:21}) have a solution $\bar{\x}\geq0$ and $\bar{\x}\ne 0$. If $t=0$, this contradicts the assumption that  $\mathcal{A}$ is a R$_0$-tensor. So we must have $t>0$.  Then for $i\in I_n$, we have $$\left(\mathcal{A}\x^{m-1}\right)_i+t=0\ \mbox{ if }\ x_i>0,$$ and hence,$$\left(\mathcal{A}\x^{m-1}\right)_i=-t<0\ \mbox{ if }\ x_i>0,$$ which  contradicts  the assumption that  $\mathcal{A}$ is semi-positive.  So $\mathcal{A}$ is a R-tensor, and hence $\mathcal{A}$ is a Q-tensor by Corollary \ref{co:31}.
 \end{proof}
 So, the following relationship of several classes of  structured sensors hold.\\

 	$$\begin{aligned}
 	\	& \ &\ \mbox{Semi-positive}&\mbox{ R$_0$-Tensors}\\
 	& \ &\Downarrow&\\
 	\mbox{P-}&\mbox{Tensors }\Rightarrow \mbox{Strictly Semi-positive  Tensors} &\Rightarrow\mbox{R-}&\mbox{Tensors}\Rightarrow\mbox{Q-Tensors}\\
 	\Downarrow& \ \ \ \ \ \ \ \ \ \ \ \ \ \ \ \ \ \ \ \ \ \ \ \ \Downarrow& \Downarrow&\\
 	\mbox{P$_0$-}&\mbox{Tensors }\Rightarrow \mbox{Semi-positive  Tensors}& \ \ \ \   \mbox{ R$_0$-}&\mbox{Tensors }
 	\end{aligned}$$\\

 \begin{Theorem}\label{th:34}\em Let  $\mathcal{A}=(a_{i_1\cdots i_m})\in T_{m, n}$ with $\mathcal{A}\geq\mathcal{O}$ ($a_{i_1\cdots i_m}\geq0$ for all $i_1\cdots i_m\in I_n$).
Then  $\mathcal{A}$ is a Q-tensor if and only if $a_{ii\cdots i}>0$ for all $i\in I_n$.
 \end{Theorem}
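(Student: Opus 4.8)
The plan is to treat the two implications separately, with the nonnegativity of $\mathcal{A}$ supplying the essential leverage in each. For the sufficiency direction ($a_{ii\cdots i}>0$ for all $i$ $\Rightarrow$ Q-tensor), I would first upgrade the hypothesis to \emph{strict semi-positivity} and then simply invoke Corollary \ref{co:32}. Concretely, fix $\x \geq \0$ with $\x \neq \0$ and choose any index $k$ with $x_k > 0$. Because every entry $a_{k i_2\cdots i_m} \geq 0$ and every product $x_{i_2}\cdots x_{i_m} \geq 0$, all summands of $(\mathcal{A}\x^{m-1})_k = \sum_{i_2,\ldots,i_m} a_{k i_2\cdots i_m} x_{i_2}\cdots x_{i_m}$ are nonnegative, so the sum is bounded below by its single diagonal term $a_{kk\cdots k}\, x_k^{m-1} > 0$. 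Thus $x_k > 0$ and $(\mathcal{A}\x^{m-1})_k > 0$, which is exactly the condition in Definition \ref{d22}(ii); Corollary \ref{co:32} then yields that $\mathcal{A}$ is a Q-tensor.

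For the necessity direction I would argue by contraposition: assuming $a_{kk\cdots k} = 0$ for some fixed $k$, I would exhibit a single $\q$ for which the tensor complementarity problem $(\q,\mathcal{A})$ has no solution. The choice is $q_k = -1$ and $q_i = 1$ for all $i \neq k$. Suppose $(\x,\w)$ with $\w = \q + \mathcal{A}\x^{m-1}$ were a solution. For each $i \neq k$, nonnegativity of $\mathcal{A}$ together with $\x \geq \0$ gives $(\mathcal{A}\x^{m-1})_i \geq 0$, hence $w_i = 1 + (\mathcal{A}\x^{m-1})_i \geq 1 > 0$; the complementarity condition $x_i w_i = 0$ then forces $x_i = 0$. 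Therefore $x_i = 0$ for all $i \neq k$, and in the $k$-th output every term with some index different from $k$ vanishes, so $(\mathcal{A}\x^{m-1})_k = a_{kk\cdots k}\, x_k^{m-1} = 0$. This gives $w_k = q_k + 0 = -1 < 0$, contradicting feasibility $\w \geq \0$. Hence no solution exists and $\mathcal{A}$ fails to be a Q-tensor.

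I expect the one point requiring genuine care to be the construction of $\q$ in the necessity argument: setting the off-$k$ components of $\q$ to $1$ is precisely what lets nonnegativity of $\mathcal{A}$ force the corresponding components of $\x$ to vanish, collapsing the whole problem onto coordinate $k$, where the vanishing diagonal entry makes the $k$-th feasibility inequality unsatisfiable. The nonnegativity hypothesis is used twice and is indispensable in both places — to guarantee $w_i \geq q_i$ in the necessity step and to make the diagonal term dominate the others in the sufficiency step. Notably, no fixed-point or topological-degree input (Lemma \ref{le:21}, Theorem \ref{th21}) is needed directly here, since sufficiency is routed through the already-established Corollary \ref{co:32} and necessity reduces to an elementary sign computation.
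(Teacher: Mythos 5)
Your proposal is correct and follows essentially the same route as the paper: sufficiency by verifying strict semi-positivity from the positive diagonal and nonnegativity and then invoking Corollary \ref{co:32}, and necessity by choosing $\q$ with $q_k<0$ and $q_i>0$ for $i\neq k$ so that complementarity forces $x_i=0$ off the $k$-th coordinate and the zero diagonal entry makes $w_k=q_k<0$. The only difference is cosmetic — you spell out the strict semi-positivity check and use the concrete values $\pm 1$ where the paper leaves the signs generic.
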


 \begin{proof} Sufficiency.  If $a_{ii\cdots i}>0$ for all $i\in I_n$ and $\mathcal{A}\geq\mathcal{O}$, then it follows from the definition \ref{d22} of the strictly semi-positive tensor that $\mathcal{A}$ is strictly semi-positive, and hence $\mathcal{A}$ is a Q-tensor by Corollary \ref{co:32}.

 Necessity. Suppose that there exists $k\in I_n$ such that $a_{kk\cdots k}=0$. Let $\q=(q_1,\cdots,q_n)^\top$ with $q_k<0$ and $q_i>0$ for all $i\in I_n$ and $i\ne k$. Since $\mathcal{A}$ is a Q-tensor, the tensor complementarity problem $(\q,\mathcal{A})$ has at least a solution. Let $\z$  be a feasible solution to $(\q,\mathcal{A})$.
Then \begin{equation}\label{eq:35}\z\geq\0, \ \w=\mathcal{A}\z^{m-1}+\q\geq\0\mbox{ and }\z^\top\w=0.\end{equation}
Clearly, $\z\ne\0$.  Since $\z\geq\0$ and $\mathcal{A}\geq0$ together with $q_i>0$ for each $i\in I_n$ with $i\ne k$, we must have $$ w_i=\left(\mathcal{A}\z^{m-1}\right)_i+q_i=\sum_{i_2,\cdots, i_m=1}^n a_{ii_2\cdots i_m}z_{i_2}\cdots
z_{i_m}+q_i>0 \mbox{ for } i\ne k\mbox{ and } i\in I_n .$$ It follows from (\ref{eq:35}) that $$z_i=0 \mbox{ for } i\ne k\mbox{ and } i\in I_n .$$
Thus, we have $$ w_k=\left(\mathcal{A}\z^{m-1}\right)_k+q_k=\sum_{i_2,\cdots, i_m=1}^n a_{ki_2\cdots i_m}z_{i_2}\cdots
z_{i_m}+q_k=a_{kk\cdots k}z_k^{m-1}+q_k=q_k<0$$ since $a_{kk\cdots k}=0.$ This contradicts the fact that $\w\geq\0$, so $a_{ii\cdots i}>0$ for all $i\in I_n$.
 \end{proof}
 \begin{Corollary}\label{co:35}\em Let a non-negative tensor $\mathcal{A}$  be a Q-tensor. Then all principal sub-tensors of $\mathcal{A}$ are also Q-tensors.
 \end{Corollary}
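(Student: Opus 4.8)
The plan is to reduce the whole statement to the characterization already obtained in Theorem \ref{th:34}, which says that a nonnegative tensor is a Q-tensor if and only if every one of its principal diagonal entries $a_{ii\cdots i}$ is positive. Since this criterion is purely pointwise in the diagonal entries, and since the notion of principal sub-tensor only selects a subcollection of the entries of $\mathcal{A}$, the corollary should follow essentially by inspection once the hypotheses are transferred to the sub-tensor.

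Concretely, I would fix a principal sub-tensor $\mathcal{A}^J_r$ indexed by a set $J \subset I_n$ with $|J| = r$, so that $\mathcal{A}^J_r = (a_{i_1\cdots i_m})$ for $i_1,\ldots,i_m \in J$. First I would check that $\mathcal{A}^J_r$ is again nonnegative: every entry of $\mathcal{A}^J_r$ is one of the entries $a_{i_1\cdots i_m}$ of $\mathcal{A}$ with all indices drawn from $J \subseteq I_n$, and all entries of $\mathcal{A}$ are nonnegative by assumption, hence $\mathcal{A}^J_r \geq \mathcal{O}$ in $T_{m,r}$.

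Next I would transfer the positivity of the diagonal. Because $\mathcal{A}$ is a nonnegative Q-tensor, Theorem \ref{th:34} gives $a_{ii\cdots i} > 0$ for all $i \in I_n$; in particular this holds for every $i \in J$. But the diagonal entries of $\mathcal{A}^J_r$ are exactly the $a_{ii\cdots i}$ with $i \in J$ (recall that for $r=1$ the principal sub-tensor is just a single diagonal entry). Thus $\mathcal{A}^J_r$ is a nonnegative tensor all of whose diagonal entries are positive.

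Finally I would apply Theorem \ref{th:34} in the reverse direction, now in dimension $r$: a nonnegative tensor in $T_{m,r}$ with all diagonal entries positive is a Q-tensor, whence $\mathcal{A}^J_r$ is a Q-tensor. As $J$ was an arbitrary index set, every principal sub-tensor of $\mathcal{A}$ is a Q-tensor. I do not expect any genuine obstacle here; the only point requiring a moment's care is that Theorem \ref{th:34} must be read as a statement valid for tensors of any dimension, so that it can legitimately be invoked for the lower-dimensional sub-tensor $\mathcal{A}^J_r \in T_{m,r}$ rather than only for $\mathcal{A}$ itself.
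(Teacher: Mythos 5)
Your proof is correct and is essentially the paper's own argument: the corollary is stated immediately after Theorem \ref{th:34} precisely because it follows by applying that characterization in both directions (necessity to $\mathcal{A}$, sufficiency to the nonnegative sub-tensor $\mathcal{A}^J_r$, whose diagonal entries are a subset of those of $\mathcal{A}$). Your added remark that Theorem \ref{th:34} must be read as valid in every dimension, so it can be invoked for $\mathcal{A}^J_r \in T_{m,r}$, is exactly the right point of care and is justified since the theorem's proof is dimension-independent.
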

 \begin{Corollary}\label{co:36}\em Let a non-negative tensor $\mathcal{A}$  be a Q-tensor. Then  $\0$ is the unique feasible solution to the tensor complementarity problem $(\q,\mathcal{A})$ for $\q\geq\0$.
 \end{Corollary}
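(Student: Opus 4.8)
The plan is to establish two things: that $\x=\0$ is always a solution of $(\q,\mathcal{A})$ when $\q\geq\0$, and that no other solution exists. The first is immediate, since with $\x=\0$ we have $\x\geq\0$, $\q+\mathcal{A}\x^{m-1}=\q\geq\0$, and $\x^\top(\q+\mathcal{A}\x^{m-1})=\0^\top\q=0$. All the substance is in uniqueness, and the engine that drives it is Theorem \ref{th:34}: because $\mathcal{A}$ is a non-negative Q-tensor, every diagonal entry satisfies $a_{ii\cdots i}>0$.

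With this in hand I would argue by contradiction. Suppose $\z$ is a solution with $\z\neq\0$, and write $\w=\q+\mathcal{A}\z^{m-1}$, so that $\z\geq\0$, $\w\geq\0$, and $\z^\top\w=0$. Since the components satisfy $z_i w_i\geq0$ for each $i$, the vanishing of $\z^\top\w=\sum_i z_i w_i$ forces every product $z_i w_i$ to vanish. Picking an index $k$ with $z_k>0$, this gives $w_k=0$, that is, $(\mathcal{A}\z^{m-1})_k+q_k=0$.

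Now I would expand $(\mathcal{A}\z^{m-1})_k=\sum_{i_2,\cdots,i_m} a_{ki_2\cdots i_m}z_{i_2}\cdots z_{i_m}$. Because $\mathcal{A}\geq\mathcal{O}$ and $\z\geq\0$, every summand is non-negative, and the diagonal summand $a_{kk\cdots k}z_k^{m-1}$ is strictly positive since $a_{kk\cdots k}>0$ and $z_k>0$. Hence $(\mathcal{A}\z^{m-1})_k>0$, and adding $q_k\geq0$ yields $w_k=(\mathcal{A}\z^{m-1})_k+q_k>0$, which contradicts $w_k=0$. Therefore no index $k$ with $z_k>0$ can exist, forcing $\z=\0$.

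This argument is a streamlined version of the necessity half of Theorem \ref{th:34}, and I do not expect any genuinely hard step: once the positivity of the diagonal is invoked, the non-negativity of $\mathcal{A}$ and of $\q$ does all the work. The only point requiring care is the complementarity bookkeeping, namely that $\z^\top\w=0$ together with termwise non-negativity yields $z_i w_i=0$ for every $i$, since this is precisely what licenses the passage from $z_k>0$ to $w_k=0$.
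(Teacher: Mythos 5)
Your proof is correct and follows essentially the same route as the paper: both invoke Theorem \ref{th:34} to obtain $a_{ii\cdots i}>0$, and then use the non-negativity of $\mathcal{A}$, $\z$, and $\q$ to show that any index $k$ with $z_k>0$ would force $w_k=(\mathcal{A}\z^{m-1})_k+q_k>0$, contradicting complementarity. The only cosmetic difference is that you extract $w_k=0$ from $\z^\top\w=0$ via termwise non-negativity before deriving the contradiction, whereas the paper contradicts $\x^\top\w=0$ directly; these are interchangeable.
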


 \begin{proof} It follows from Theorem \ref{th:34} that $a_{ii\cdots i}>0$ for all $i\in I_n$, and hence $$\left(\mathcal{A}\x^{m-1}\right)_i=\sum_{i_2,\cdots, i_m=1}^n a_{ii_2\cdots i_m}x_{i_1}\cdots
x_{i_m}= a_{ii\cdots i}x_i^{m-1}+\sum_{(i_2,\cdots, i_m)\ne (i,\cdots,i)} a_{ii_2\cdots i_m}x_{i_1}\cdots
x_{i_m}.$$

If $\x=(x_1,\cdots,x_n)^\top$ is any feasible solution of the tensor complementarity problem $(\q,\mathcal{A})$, then we have \begin{equation}\label{eq:36}\x\geq\0, \ \w=\mathcal{A}\x^{m-1}+\q\geq\0\mbox{ and }\x^\top\w=\mathcal{A}\x^m+\x^\top\q=0.\end{equation}
 Suppose $x_i>0$ for some $i\in I_n$. Then $$w_i=\left(\mathcal{A}\x^{m-1}\right)_i+q_i=a_{ii\cdots i}x_i^{m-1}+\sum_{(i_2,\cdots, i_m)\ne (i,\cdots,i)} a_{ii_2\cdots i_m}x_{i_1}\cdots x_{i_m}+q_i>0,$$
 and hence, $\x^\top\w=x_iw_i+\sum\limits_{k\ne i}x_kw_k>0$. This contradicts the fact that $\x^\top\w=0$. Consequently, $x_i=0$ for all  $i\in I_n$.
\end{proof}

Following the above conclusions together with Theorems 3.2 and 3.4 of Song and Qi \cite{SQ15}, the following results are obvious.

 \begin{Corollary}\label{co:37}\em Let  $\mathcal{A}$  be a non-negative tensor. Then the following are equivalent:
 \begin{itemize}
 \item[(i)] $\mathcal{A}$ is a Q-tensor;
 \item[(ii)] $\mathcal{A}$ is a R-tensor;
 \item[(iii)] $\mathcal{A}$ is a  strictly semi-positive tensor;
 \item[(iv)] $a_{ii\cdots i}>0\mbox{ for all }i\in I_n$.	
 \end{itemize}	
	\end{Corollary}

  \begin{Corollary}\label{co:37}\em Let  $\mathcal{A}$  be a symmetric and non-negative tensor. Then the following are equivalent:
  	\begin{itemize}
  		\item[(i)] $\mathcal{A}$ is a Q-tensor;
  		\item[(ii)] $\mathcal{A}$ is a R-tensor;
  		\item[(iii)] $\mathcal{A}$ is a  strictly semi-positive tensor;
  		\item[(iv)] $\mathcal{A}$ is a  strictly copositive tensor;
  		\item[(v)] $a_{ii\cdots i}>0\mbox{ for all }i\in I_n$.	
  	\end{itemize}	
  \end{Corollary}

\begin{Question} \label{Q1}\em Let $\mathcal{A}$ be a Q-tensor.
 \begin{itemize}
\item Whether or not  a nonzero solution $\x$ of Tensor Complementarity Problem $(\0,\mathcal{A})$ contains at least two nonzero components if $\mathcal{A}$ is a semi-positive Q-tensor;

\item Whether or not there are some relation between the eigenvalue of (symmetric) Q-tensor and the feasible solution of Tensor Complementarity Problem  $(\q, \mathcal{A})$.\end{itemize}
\end{Question}




\end{document}